\theoremstyle{plain}
\newtheorem{theorem}{Theorem}
\newtheorem{corollary}[theorem]{Corollary}
\newtheorem{lemma}{Lemma}
\newtheorem{conjecture}{Conjecture}
\theoremstyle{definition}
\newtheorem{example}{Example}
\newcommand{\ints}{\mathbb{Z}}
\title{Minimizing the regularity of maximal regular antichains of $2$- and $3$-sets}
\author{
Thomas Kalinowski \\ 
\textit{University of Newcastle, Australia}
\and
Uwe Leck\\
\textit{University of Wisconsin-Superior, U.S.A.}
\and
Christian Reiher\\
\textit{University of Hamburg, Germany}
\and
Ian T. Roberts\\
\textit{Charles Darwin University, Australia}
}
\begin{document}

\maketitle

\begin{abstract}
Let $n\geqslant 3$ be a natural number. We study the problem to find the smallest $r$ such that there is a family $\mathcal{A}$ of $2$-subsets and $3$-subsets of $[n]=\{1,2,\ldots,n\}$ with the following properties: (1) $\mathcal{A}$ is an antichain, i.e., no member of $\mathcal A$ is a subset of any other member of $\mathcal A$, (2) $\mathcal A$ is maximal, i.e., for every $X\in 2^{[n]}\setminus\mathcal A$ there is an $A\in\mathcal A$ with $X\subseteq A$ or $A\subseteq X$, and (3) $\mathcal A$ is $r$-regular, i.e., every point $x\in[n]$ is contained in exactly $r$ members of $\mathcal A$. We prove lower bounds on $r$, and we describe constructions for regular maximal antichains with small regularity. 
\end{abstract}

\section{Introduction}\label{sec:intro}
Let $n\geqslant 3$ be an integer and $[n]:=\{1,2,\dots,n\}$. By $2^{[n]}$ we denote the family of all subsets of $[n]$ and by $\binom{[n]}k$ the family of all $k$-subsets of $[n]$. A family $\mathcal{A}\subseteq 2^{[n]}$ is an \emph{antichain} if $A\not\subseteq B$ for all distinct $A,B\in\mathcal{A}$. An antichain $\mathcal A$ is called \emph{flat} if it is contained in two consecutive levels of $2^{[n]}$, i.e., if $\mathcal A\subseteq\tbinom{[n]}{k}\cup\tbinom{[n]}{k+1}$ for some $k$. The remarkable \emph{Flat Antichain Theorem}, which follows from results of Lieby~\cite{lieby1999extremal} (see also~\cite{lieby2004antichains}) and Kisv{\"o}lcsey~\cite{kisvolcsey2006flattening}, says that for every antichain $\mathcal A$ there is an equivalent flat antichain $\mathcal A'$, where the equivalence relation is defined by: $\mathcal A'$ is equivalent to $\mathcal A$  if and only if $\lvert\mathcal A'\rvert=\lvert\mathcal A\rvert$ and $\sum_{A\in\mathcal A'}\lvert A\rvert=\sum_{A\in\mathcal A}\lvert A\rvert$. Griggs et al.~\cite{GriggsHartmannLeckRoberts2012} showed that the flat antichains minimize the \emph{BLYM-values} $\sum_{A\in\mathcal A}\tbinom{n}{\lvert A\rvert}^{-1}$ within their equivalence classes, and more generally, they minimize (maximize) $\sum_{A\in\mathcal A}w(\lvert A\rvert)$ for every convex (concave) function $w$. Gr\"uttm\"uller et al. \cite{GruettmuellerHartmannKalinowskiLeckRoberts2009} initiated the study of maximal flat antichains. In particular, they asked for the minimum size of a maximal antichain consisting only of $k$-sets and $(k+1)$-sets, and settled the first nontrivial case $k=2$. Gerbner et al.~\cite{gerbner2011saturating} derived asymptotic bounds for general $k$ and studied some related problems for $r$-Sperner families, i.e., families of subsets of $[n]$ which do not contain $r+1$ sets that form a chain with respect to inclusion. A different generalization was proposed in~\cite{kalinowski2012mfac} where the flatness condition is replaced by the requirement that the cardinalities of the members of the set family $\mathcal A$ have to be in a fixed small set $K$. B\"ohm~\cite{boehm2011PhD,bohm2012k,boehm2012MaximalFlatRegularAntichains} proposed the study of regular antichains, i.e., antichains $\mathcal A$ such that every element of the ground set appears in the same number of sets in $\mathcal A$. In particular, he studied the problem for which values of $r$ there are maximal flat antichains such that every element of the ground set occurs exactly $r$ times. The same problem without the maximality and flatness conditions has been studied by Roberts and B\"ohm~\cite{roberts2013existence}.  

In this paper we focus on maximal flat antichains consisting of $2$-sets and $3$-sets: we consider \emph{maximal $(2,3)$-antichains} $\mathcal A$ on $[n]:=\{1,2,\ldots,n\}$, i.e., $\mathcal A=\mathcal A_2\cup\mathcal A_3$ with $\mathcal A_2\subseteq\tbinom{[n]}{2}$ and $\mathcal A_3\subseteq\tbinom{[n]}{3}$. The antichain $\mathcal A$ is called $r$-regular if every element of $[n]$ is contained in exactly $r$ members of $\mathcal A$. Clearly, the maximum possible value of $r$ is $\tbinom{n-1}{2}$, obtained by choosing $\mathcal A_3=\tbinom{[n]}{3}$, but the smaller value $r=n-1$ can be achieved by choosing $\mathcal A_2 = \tbinom{[n]}{2}$.  We are interested in antichains of small regularity, so henceforth we will assume that $r<n-1$.

Following \cite{GruettmuellerHartmannKalinowskiLeckRoberts2009,kalinowski2012mfac} we associate a graph $G$ with $\mathcal A$. The vertex set is $[n]$ and the edge set is $\tbinom{[n]}{2}\setminus\mathcal A_2$. The elements of $\mathcal A_3$ are the triangles in $G$, and every edge of $G$ is contained in at least one triangle. For a vertex $x$ let $d(x)$ denote its degree and $t(x)$ the number of triangles containing $x$, i.e., the number of edges in the neighbourhood of $x$. Similarly, for an edge $xy$, $t(xy)$ is the number of triangles containing $xy$, i.e., the number of common neighbours of $x$ and $y$. The antichain $\mathcal A$ is $r$-regular if and only if in the corresponding graph, for all vertices $x$,
\[d(x)-t(x)=\overline r\]
where $\overline r$ is defined as $\overline r=n-1-r>0$. In the present paper we study the question for which values of $r$ an $r$-regular maximal $(2,3)$-antichain exists. 

In Section~\ref{sec:lower_bounds} we prove two lower bounds for $r$. The first one $r\geqslant (5n-9)/6$ is valid for all $n$ (Corollary~\ref{cor:56bound}) and the second one $r\geqslant(9/10+o(1))n$ holds asymptotically for $n\to\infty$ (Corollary~\ref{cor:asump_9_10}). Section~\ref{sec:constructions} contains two constructions for regular maximal $(2,3)$-antichains of small regularity, and in the final Section~\ref{sec:conjectures} we propose two conjectures strengthening our results.     

\section{Lower bounds}\label{sec:lower_bounds}
In this section we start with a lower bound for the regularity of a maximal $(2,3)$-antichain which is valid for all values of $n$. The cases in which this lower bound is achieved are completely described, and we prove an asymptotic lower bound for large $n$.
\begin{lemma}\label{lem:bound_1}
Let $G$ be a graph with the property that every edge is contained in at least one triangle. Then for every edge $xy$,
\[t(x)\geqslant \frac{d(x)+t(xy)-1}{2}.\]
\end{lemma}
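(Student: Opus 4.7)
The plan is to interpret $t(x)$ as the number of edges in the induced subgraph $G[N(x)]$ on the neighbourhood of $x$, and then to double-count the edge endpoints in $G[N(x)]$.

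First I would observe that $2t(x) = \sum_{z \in N(x)} \deg_{G[N(x)]}(z)$, where $\deg_{G[N(x)]}(z)$ is the number of neighbours of $z$ lying in $N(x)$. Now the vertex $y$ itself lies in $N(x)$ (since $xy$ is an edge), and its degree in $G[N(x)]$ is precisely the number of common neighbours of $x$ and $y$, i.e.\ $t(xy)$. For any other vertex $z \in N(x) \setminus \{y\}$, the edge $xz$ exists in $G$ and by hypothesis lies in at least one triangle; that triangle forces $z$ to have at least one neighbour inside $N(x)$, so $\deg_{G[N(x)]}(z) \geqslant 1$.

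Putting these two contributions together yields
\[2t(x) \;\geqslant\; t(xy) + (d(x)-1) \cdot 1 \;=\; d(x) + t(xy) - 1,\]
which rearranges to the desired inequality.

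I do not expect any real obstacle here: the only conceptual step is recognising that the triangle-covering hypothesis gives a minimum-degree condition of $1$ in each neighbourhood subgraph, and the rest is a single handshake-style estimate. The argument also makes clear where equality can occur (when every $z \in N(x) \setminus \{y\}$ has exactly one neighbour in $N(x)$), which may be useful later in the paper when the extremal cases of the resulting bounds on $r$ are analysed.
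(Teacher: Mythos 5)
Your proof is correct and is essentially the paper's argument in a different guise: the paper counts the $t(xy)$ triangles at $x$ through $y$ and then covers the remaining $d(x)-t(xy)-1$ neighbours of $x$ by triangles avoiding $y$, each accounting for at most two of them, which is precisely your handshake count in $G[N(x)]$ with $\deg_{G[N(x)]}(y)=t(xy)$ and $\deg_{G[N(x)]}(z)\geqslant 1$ for all other $z$. No gaps.
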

\begin{proof}
Fix an edge $xy$. There are $d(x)-t(xy)-1$ neighbours of $x$ that are not contained in a triangle with $x$ and $y$. To cover these there must be at least $(d(x)-t(xy)-1)/2$ triangles containing $x$, but not $y$. So the total number of triangles containing $x$ is at least
\[t(xy)+\frac{d(x)-t(xy)-1}{2}=\frac{d(x)+t(xy)-1}{2}.\qedhere\]
\end{proof}
\begin{theorem}\label{thm:16_bound}
If $G$ corresponds to an $r$-regular maximal $(2,3)$-antichain on $n$ points, then $\bar r\leqslant (n+3)/6$.
\end{theorem}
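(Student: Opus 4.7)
The plan is to translate the hypothesis of $r$-regularity into the condition $t(x)=d(x)-\bar r$ and to combine Lemma \ref{lem:bound_1} with an inclusion–exclusion count around a triangle. First, observe that we may assume $G$ contains a triangle: if $G$ has an edge, that edge lies in a triangle by hypothesis; and if $G$ has no edges at all, then $\mathcal A_2=\tbinom{[n]}{2}$, $\mathcal A_3=\emptyset$, giving $\bar r=0$ and the bound trivially. So fix a triangle $xyz$.

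Next I would rewrite Lemma \ref{lem:bound_1} in terms of $\bar r$. Substituting $t(x)=d(x)-\bar r$ into $t(x)\geqslant(d(x)+t(xy)-1)/2$ gives, for every edge $xy$ and either endpoint,
\[
 d(x)\geqslant 2\bar r + t(xy)-1.
\]
For the triangle $xyz$, applying this to the two edges incident to each of the three vertices and summing yields
\[
 d(x)+d(y)+d(z)\geqslant 6\bar r + t(xy)+t(xz)+t(yz)-3.
\]

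The second ingredient is an upper bound on $d(x)+d(y)+d(z)$. Let $\tau=\lvert N(x)\cap N(y)\cap N(z)\rvert$. By inclusion–exclusion applied to the neighbourhoods,
\[
 \lvert N(x)\cup N(y)\cup N(z)\rvert = d(x)+d(y)+d(z)-t(xy)-t(xz)-t(yz)+\tau,
\]
and this union is contained in $[n]$, so
\[
 d(x)+d(y)+d(z)\leqslant n + t(xy)+t(xz)+t(yz)-\tau.
\]
Combining the two inequalities, the triangle-count terms cancel and we obtain $6\bar r - 3\leqslant n-\tau\leqslant n$, i.e.\ $\bar r\leqslant (n+3)/6$.

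I do not expect a serious obstacle here; the only delicate point is to balance Lemma \ref{lem:bound_1} correctly across the three vertices of the triangle so that the unknown quantities $t(xy),t(xz),t(yz)$ cancel against the same terms produced by inclusion–exclusion. The gain over a pairwise argument (which would only yield $\bar r\leqslant (n+1)/4$) comes precisely from using a triangle, which introduces three edges and the nonnegative correction $\tau$, and matches the target bound exactly.
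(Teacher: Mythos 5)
Your proof is correct and follows essentially the same route as the paper: both rest on Lemma \ref{lem:bound_1} together with the observation that $d(x)+d(y)+d(z)-t(xy)-t(yz)-t(xz)\leqslant n$ for a triangle $xyz$, which the paper obtains by counting vertices according to their number of neighbours in the triangle and you obtain equivalently by inclusion--exclusion on the neighbourhoods. The only cosmetic difference is that you average Lemma \ref{lem:bound_1} over all six vertex--edge incidences of the triangle, whereas the paper selects the single pair with $d(x)-t(xy)\leqslant n/3$; both yield $\bar r\leqslant(n+3)/6$.
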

\begin{proof}
Fix a triangle $xyz$ and let $a_i$ denote the number of vertices outside $xyz$ that have exactly $i$ neighbours in $xyz$ ($i=0,1,2,3$). Then
\begin{align*}
a_0+a_1+a_2+a_3 &= n-3, \\
a_1+2a_2+3a_3 &= d(x)+d(y)+d(z)-6, \\
a_2+3a_3 &= t(xy)+t(yz)+t(xz)-3. 
\end{align*}
Subtracting the third from the second equation and using the first equation to bound $a_1+a_2$, we obtain
\[[d(x)-t(xy)]+[d(y)-t(yz)]+[d(z)-t(xz)]=a_1+a_2+3\leqslant n.\]
At least one of the three terms in square brackets on the left hand side, say $d(x)-t(xy)$, is at most $n/3$. We use Lemma \ref{lem:bound_1} to estimate $\bar r$,  
\[\bar r=d(x)-t(x)\leqslant d(x)-\frac{d(x)+t(xy)-1}{2}=\frac{d(x)-t(xy)+1}{2}\leqslant\frac{n+3}{6}.\qedhere\]  
\end{proof}
\begin{corollary}\label{cor:56bound}
  If there exists an $r$-regular maximal $(2,3)$-antichain on $n$ points, then $r\geqslant(5n-9)/6$.
\end{corollary}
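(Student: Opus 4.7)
The plan is to derive Corollary \ref{cor:56bound} as an almost immediate consequence of Theorem \ref{thm:16_bound}. Recall the definition $\bar r = n - 1 - r$, so a lower bound on $r$ is the same as an upper bound on $\bar r$, and the two statements are formally equivalent modulo this substitution.

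First I would handle the generic case $r < n-1$, which corresponds to $\bar r > 0$ and is the setting in which Theorem \ref{thm:16_bound} has been established. The theorem yields $\bar r \leqslant (n+3)/6$, and substituting $\bar r = n - 1 - r$ gives $n - 1 - r \leqslant (n+3)/6$. A one-line rearrangement then produces
\[
r \geqslant n - 1 - \frac{n+3}{6} = \frac{5n - 9}{6},
\]
as required.

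It remains to address the boundary case $r = n-1$, which is realised by the full antichain $\mathcal{A} = \binom{[n]}{2}$. Here one simply verifies the inequality $n - 1 \geqslant (5n-9)/6$ directly; this is equivalent to $n \geqslant -3$ and therefore holds trivially for all $n \geqslant 3$. There is no real obstacle in the proof: the work has already been done inside Theorem \ref{thm:16_bound}, and the corollary is essentially a unit conversion between $\bar r$ and $r$. The only minor care needed is that the standing hypothesis $\bar r > 0$ formally excludes the trivial antichain $\binom{[n]}{2}$, which is why the $r = n-1$ case is treated separately.
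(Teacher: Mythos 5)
Your proposal is correct and matches the paper's (implicit) argument: the corollary is stated as an immediate consequence of Theorem \ref{thm:16_bound} via the substitution $\bar r = n-1-r$, exactly as you do. The extra check for the case $r = n-1$ is harmless and sound.
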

Next we characterize the cases when equality in Corollary \ref{cor:56bound} can occur. 
\begin{corollary}\label{thm:quadrangles}
There exists a $(5n-9)/6$-regular maximal $(2,3)$-antichain if and only if $n\in\{3,9,15,27\}$. Furthermore, the extremal antichains for these values are unique up to isomorphism.  
\end{corollary}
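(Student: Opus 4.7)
The plan is to analyse exactly which conditions on $G$ equality in Corollary~\ref{cor:56bound} forces, to upgrade those conditions into the statement that $G$ is a strongly regular graph of a very restricted shape, and finally to invoke the classification of such SRGs.

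I would first trace the proof of Theorem~\ref{thm:16_bound} backwards. Equality $\overline r=(n+3)/6$ requires, for every triangle $xyz$, that all three bracketed quantities $d(\cdot)-t(\cdot\,\cdot)$ equal $n/3$, that $a_0=a_3=0$, and that Lemma~\ref{lem:bound_1} is tight on every edge of $G$. The identity $d(x)-t(xy)=n/3$ on every edge gives $d(x)=d(y)$, so $d$ is constant on each component; and the vanishing of $a_0$ forces $G$ connected, since a vertex in a different component from a given triangle would have no neighbour in it. Hence $G$ is $k$-regular with $k=n/3+1$ and $n=3(k-1)$, every edge lies in exactly $\lambda:=k-n/3$ triangles, and $G$ is $K_4$-free (from $a_3=0$).

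The two expressions for $t(x)$ now pin $\lambda$ down: the induced subgraph on $N(x)$ is $\lambda$-regular of order $k$, giving $t(x)=k\lambda/2$, while equality in Lemma~\ref{lem:bound_1} gives $t(x)=(k+\lambda-1)/2$. Equating these yields $(k-1)(\lambda-1)=0$, and since $k\geqslant 2$ we must have $\lambda=1$. To upgrade from regularity to strong regularity, I would prove that $|N(x)\cap N(y)|=k/2$ for every non-edge $xy$. The $t(x)=k/2$ triangles through $x$ partition $N(x)$ into pairs $\{u,v\}$; for a non-neighbour $y$ of $x$, applying $a_0=0$ to the triangle $xuv$ shows that $y$ has at least one neighbour in $\{u,v\}$, so $|N(x)\cap N(y)|\geqslant k/2$. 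A standard double count of length-two paths out of $x$ gives $\sum_{y\not\sim x,\,y\neq x}|N(x)\cap N(y)|=k(k-2)$, and there are $2k-4$ such $y$, so the average is $k/2$. Combined with the lower bound this forces equality on every non-edge, and $G$ is strongly regular with parameters $(3(k-1),k,1,k/2)$.

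It remains to enumerate these SRGs. Their non-principal eigenvalues are $1$ and $-k/2$, and solving the multiplicity equations yields $g=8(k-1)/(k+2)$ for the multiplicity of $-k/2$. Integrality of $g$ together with $k$ even forces $(k+2)\mid 24$, so $k\in\{2,4,6,10,22\}$. For $k=22$ the parameters are $(63,22,1,11)$ with $g=7$, violating the absolute bound $v\leqslant g(g+3)/2=35$. The four surviving parameter sets $(3,2,1,1)$, $(9,4,1,2)$, $(15,6,1,3)$, $(27,10,1,5)$ are each realised by a unique SRG up to isomorphism, namely $K_3$, the Paley graph on nine vertices (equivalently the $3\times 3$ rook graph), the Kneser graph $K(6,2)$, and the collinearity graph of the generalised quadrangle $GQ(2,4)$ (equivalently the complement of the Schl\"afli graph); these uniqueness results are classical and would be quoted. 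Translating back through the correspondence $\mathcal A\leftrightarrow G$ yields both parts of the corollary. The main obstacle is the second paragraph: converting the purely local equalities from Theorem~\ref{thm:16_bound} into two-parameter strong regularity, in particular establishing $|N(x)\cap N(y)|=k/2$ on non-edges, is the decisive combinatorial step; the subsequent arithmetic and the four classical uniqueness statements are routine by comparison.
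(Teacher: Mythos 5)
Your argument is correct, and its first half is essentially the paper's own: both trace equality back through Theorem~\ref{thm:16_bound} and Lemma~\ref{lem:bound_1} to conclude that $G$ must be $(n/3+1)$-regular with every edge in exactly one triangle (the paper derives $t(xy)=1$ by noting that $d(x)\geqslant n/3+2$ would give $t(x)=\tfrac12\sum_y t(xy)\geqslant d(x)$, which is your identity $(k-1)(\lambda-1)=0$ in different clothing). Where you genuinely diverge is the classification step: the paper simply quotes from \cite{GruettmuellerHartmannKalinowskiLeckRoberts2009} that the graphs on $n$ vertices with $n(n+3)/6$ edges and every edge in exactly one triangle are the triangle and the collinearity graphs of the generalized quadrangles $GQ(2,(n-3)/6)$, and then invokes the existence and uniqueness of $GQ(2,t)$ for $t\in\{1,2,4\}$ from \cite{payne2009finite}; you instead reprove the classification by upgrading to strong regularity --- showing $\lvert N(x)\cap N(y)\rvert = k/2$ on non-edges via $a_0=0$ plus the averaging count $\sum_{y\not\sim x}\lvert N(x)\cap N(y)\rvert=k(k-2)$ over the $2k-4$ non-neighbours --- and then running the multiplicity computation $g=8-24/(k+2)$ and the absolute bound to kill $(63,22,1,11)$. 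Your route is longer but self-contained up to the four classical SRG uniqueness statements, and it exposes why exactly $n\in\{3,9,15,27\}$ survive; the paper's is shorter but rests entirely on the cited classification. I checked your intermediate steps (forcing $a_0=a_3=0$ and $d(x)-t(xy)=n/3$ for every incident vertex--edge pair of a triangle, the eigenvalues $1$ and $-k/2$, the divisibility $(k+2)\mid 24$) and they are sound; the only cosmetic caveat is that the non-edge analysis is vacuous for $k=2$, where $G=K_3$ directly, and that one should say a word confirming that each of the four graphs really does yield a maximal $(5n-9)/6$-regular antichain, which is immediate from $t(xy)=1$ and $d(x)-t(x)=k/2=(n+3)/6$.
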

\begin{proof}
We use the observation from \cite{GruettmuellerHartmannKalinowskiLeckRoberts2009} that the graphs on $n$ vertices having $n(n+3)/6$ edges and every edge contained in exactly one triangle are the triangle for $n=3$ and graphs corresponding to generalized quadrangles $GQ(2,(n-3)/6)$ which exist precisely for $n\in\{9,15,27\}$ (see \cite{payne2009finite}). So we just have to prove that in order to achieve $d(x)-t(x)=(n+3)/6$ we must have $n(n+3)/6$ edges and every edge has to be contained in exactly one triangle. In order to have equality in the proof of Theorem \ref{thm:16_bound} we need $d(x)-t(xy)=n/3$ for every edge $xy$. In particular, $n$ is divisible by $3$ and $t(xy)=d(x)-n/3$ is the same for all neighbours $y$ of $x$. If $d(x)\geqslant n/3+2$ then
\[t(x)=\frac12\sum_{u\,:\,xy\in E}t(xy)\geqslant d(x).\]
Hence we can conclude that $d(x)=n/3+1$ for all vertices $x$ and $t(xy)=1$ for all edges $xy$. 
\end{proof}

Now we prove an asymptotic upper bound on $\bar r$. Let $E$ and $T$ be the sets of edges and triangles in $G$, respectively. Intuitively, it seems reasonable to look for a graph with $d(x)=2\bar r$ and $t(x)=\bar r$ for every vertex $x$. The deviation from these values is measured by $\delta(x)=d(x)-2\bar r=t(x)-\bar r$ and we also put $\Delta=\sum_{x\in V}\delta(x)$. Summing over all vertices gives
\begin{align*}
  2\lvert E\rvert &=\sum_{x\in V}d(x)= 2\bar rn+\Delta, & 3\lvert T\rvert &=\sum_{x\in V}t(x)= \bar rn+\Delta.
\end{align*}
For the calculations it is convenient to define 
\begin{align*}
  \gamma&=\frac{\bar r}{n} & \beta&=\frac{1}{2n^2}\sum_{x\in V}\delta(x). 
\end{align*}
With this notation, 
\begin{align*}
  d(x) &= 2\gamma n+\delta(x), & t(x) &=\gamma n+\delta(x), \\
  \lvert E\rvert &= (\gamma+\beta)n^2, & \lvert T\rvert &= \frac{\gamma+2\beta}{3}n^2.
\end{align*}
\begin{lemma}\label{lem:gamma_beta} 
If $G$ corresponds to an $r$-regular maximal $(2,3)$-antichain then $\gamma\leqslant\beta+o(1)$.
\end{lemma}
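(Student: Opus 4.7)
The conclusion $\gamma \leqslant \beta + o(1)$ is equivalent to $|E| \geqslant 2\bar r n - o(n^2)$, i.e.\ to the statement that the average degree of $G$ is asymptotically at least $4\bar r$, doubling the trivial lower bound coming from $d(x) \geqslant 2\bar r$. My plan is to combine a triangle-level double count in the spirit of the proof of Theorem~\ref{thm:16_bound} with the sharp form of Lemma~\ref{lem:bound_1}.

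I would first record two consequences of Lemma~\ref{lem:bound_1}: the pointwise estimate $t(xy) \leqslant \min(\delta(x),\delta(y)) + 1$ valid on every edge, and the identity $\sum_{y \in N(x)}(t(xy)-1) = 2t(x)-d(x) = \delta(x)$ valid at every vertex. Since $t(xy) \geqslant 1$ because every edge lies in a triangle, the summands on the left are nonnegative, which forces $\delta(x) \geqslant 0$ for every $x$ (the case $d(x) \in \{0,1\}$ is ruled out by the assumption $\bar r \geqslant 1$). This yields only $\beta \geqslant 0$, the weak half of the target. Next I would sum, over every triangle $xyz$ of $G$, the identity
\[
[d(x)-t(xy)] + [d(y)-t(yz)] + [d(z)-t(xz)] = a_1 + a_2 + 3 \leqslant n
\]
from the proof of Theorem~\ref{thm:16_bound}; rewriting the left-hand side gives the master relation
\[
\sum_{x \in V} d(x)\,t(x) \;-\; \sum_{e \in E} t(e)^2 \;\leqslant\; n\,|T|.
\]
Substituting $d(x)t(x) = 2\gamma^2 n^2 + 3\gamma n\,\delta(x) + \delta(x)^2$ and $|T| = (\gamma+2\beta)n^2/3$, and then controlling $\sum_e t(e)^2$ from above by $t(e)^2 \leqslant t(e)\bigl(\min(\delta(x),\delta(y)) + 1\bigr)$, one expresses everything in terms of $\gamma$, $\beta$, and $\sum_x \delta(x)^2$. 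The Cauchy--Schwarz lower bound $\sum_x \delta(x)^2 \geqslant \Delta^2/n = 4\beta^2 n^3$ then eliminates the auxiliary quantity, leaving an inequality purely in $\gamma$ and $\beta$ up to $o(1)$ terms.

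The main obstacle is to arrange these estimates so that the outcome is the \emph{linear} bound $\gamma \leqslant \beta + o(1)$ rather than a quadratic shadow of it. A cruder use of Lemma~\ref{lem:bound_1}, for instance bounding $\sum_e t(e)^2$ by $\max_e t(e) \cdot \sum_e t(e) \leqslant (n(1-2\gamma)+1)\cdot 3|T|$, yields only $(2\beta+\gamma)(\beta+2\gamma) \leqslant \tfrac{2}{3}(\gamma+2\beta) + o(1)$, equivalent to $\beta + 2\gamma \leqslant \tfrac{2}{3} + o(1)$, which is too weak. The necessary saving should come from preserving the asymmetric form $t(e) - 1 \leqslant \min(\delta(x),\delta(y))$ throughout the summation --- in particular not passing to $\tfrac{1}{2}(\delta(x)+\delta(y))$ --- and exploiting the exact identity $\sum_e (t(e)-1) = 3|T| - |E| = \beta n^2$ together with a rearrangement-type estimate on $\sum_e \min(\delta(x),\delta(y))$ that relates this sum to $\Delta = 2\beta n^2$ and $|E| = (\gamma+\beta)n^2$. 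This pointwise-to-global step is where I expect the bulk of the work to lie.
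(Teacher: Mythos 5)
Your proposal has a genuine gap at exactly the point you flag as ``where I expect the bulk of the work to lie,'' and no refinement of the counting scheme you describe can close it. The paper's proof is entirely different and much shorter: since $\bar r>0$ forces $\lvert T\rvert=o(n^3)$, the triangle removal lemma allows one to delete $o(n^2)$ edges so as to destroy all triangles; every surviving edge still lies in some triangle of $G$, and each such triangle contains a deleted edge and hence at most two surviving edges, whence $\lvert E\rvert\leqslant 2\lvert T\rvert+o(n^2)$, i.e. $(\gamma+\beta)/2\leqslant(\gamma+2\beta)/3+o(1)$, which rearranges to $\gamma\leqslant\beta+o(1)$. The removal lemma is not a convenience here but the essential input: restricted to the graphs of Section~\ref{subsec:algebraic_construction} (every edge in exactly one triangle, $d(x)=2p$, $t(x)=p$, so $\delta\equiv 0$, $\beta=0$, $\gamma=p/n$), the lemma asserts precisely that $p=o(n)$, which is the Ruzsa--Szemer\'edi $(6,3)$ theorem.

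Here is why your route cannot work. Every estimate in your toolkit --- Lemma~\ref{lem:bound_1}, the identity $\sum_{y\in N(x)}(t(xy)-1)=\delta(x)$, the bound $t(xy)-1\leqslant\min(\delta(x),\delta(y))$, the master relation $\sum_x d(x)t(x)-\sum_e t(e)^2\leqslant n\lvert T\rvert$, Cauchy--Schwarz, and any rearrangement-type inequality in the $\delta$'s --- is satisfied, for every $n$, by a hypothetical graph with all $t(e)=1$ and all $d(x)=n/3+1$, for which $\gamma\to 1/6$ and $\beta=0$. Any inequality in $\gamma$ and $\beta$ deducible from these relations must therefore admit the point $(\gamma,\beta)=(1/6,0)$ and so cannot imply $\gamma\leqslant\beta+o(1)$; indeed your master relation evaluated on such a graph returns only $\gamma\leqslant 1/6+o(1)$, which is Theorem~\ref{thm:16_bound} again. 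Excluding such configurations for large $n$ is exactly what requires regularity-method machinery, and the Behrend-type examples in the paper, with $p=n^{1-o(1)}$, show that the truth is genuinely non-polynomial in character, so no fixed local double count will produce it. The one thing you do establish, $\delta(x)\geqslant 0$ and hence $\beta\geqslant 0$, is correct but, as you concede, not the content of the lemma.
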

\begin{proof} 
We may assume that $\lvert T\rvert=o(n^3)$, since otherwise $\bar r$ is negative for large $n$. The triangle removal lemma \cite{Fox2010,RuszaSzemeredi1976}, implies that $\lvert T\rvert \geqslant \lvert E\rvert/2+o(n^2)$, hence $(\gamma+2\beta)/3\geqslant(\gamma+\beta)/2+o(1)$, and rearranging terms gives the claimed inequality.
\end{proof}
\begin{lemma}\label{lem:t_xy_bound}
$\displaystyle\sum_{xy\in E}t(xy)^2\leqslant\frac12\sum_{x\in V}\delta(x)^2+o(n^3)$.
\end{lemma}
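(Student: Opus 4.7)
The plan is to derive a two-sided interval bound on $t(xy)$ from Lemma \ref{lem:bound_1}, convert $\sum_{xy\in E}t(xy)^2$ into a per-vertex sum, and apply the quadratic inequality $(a-L)(a-U)\le 0$ valid for every $a\in[L,U]$.

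First, I would rewrite Lemma \ref{lem:bound_1} as $t(xy)\le 2t(x)-d(x)+1$ and substitute $d(x)=2\gamma n+\delta(x)$ and $t(x)=\gamma n+\delta(x)$ to obtain
\[
  t(xy) \le \delta(x) + 1.
\]
Since every edge of $G$ lies in at least one triangle (this is built into the correspondence between maximal $(2,3)$-antichains and the graph $G$), we also have $t(xy)\ge 1$. Hence $t(xy)\in[1,\delta(x)+1]$ for every edge $xy$.

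Fix a vertex $x$. From $(t(xy)-1)\bigl(t(xy)-(\delta(x)+1)\bigr)\le 0$ one obtains the quadratic bound
\[
  t(xy)^2 \le (\delta(x)+2)\,t(xy) - (\delta(x)+1).
\]
Summing this inequality over the $d(x)$ neighbours $y$ of $x$, and using the handshake-type identity $\sum_{y\colon xy\in E} t(xy) = 2t(x) = d(x)+\delta(x)$, a short calculation simplifies the right-hand side to $\delta(x)^2 + 2\delta(x) + d(x)$. Summing over all $x\in V$, the left-hand side equals $2\sum_{xy\in E}t(xy)^2$ (each edge contributing once at each of its endpoints), while the right-hand side becomes $\sum_{x\in V}\delta(x)^2 + 2\Delta + 2|E|$. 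Dividing by $2$ gives
\[
  \sum_{xy\in E} t(xy)^2 \le \tfrac{1}{2}\sum_{x\in V}\delta(x)^2 + \Delta + |E|.
\]
Since $|E| = (\gamma+\beta)n^2$ and $\Delta = 2\beta n^2$ are both $O(n^2)$ (using the trivial bounds $\gamma,\beta\le\tfrac{1}{2}$), the error term is $o(n^3)$ as required.

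The only subtle point is the algebraic choice in the second step: using merely the upper bound $t(xy)\le\delta(x)+1$ through the cruder estimate $\sum_y t(xy)^2 \le (\delta(x)+1)\sum_y t(xy)$ inflates the leading constant to $1$ and introduces a spurious $\Theta(n^3)$ term of size roughly $2\gamma\beta n^3$, which cannot be absorbed into $o(n^3)$. It is the simultaneous use of both endpoints $L=1$ and $U=\delta(x)+1$, via the quadratic inequality $(a-L)(a-U)\le 0$, that yields exactly the factor $\tfrac{1}{2}$ and collapses the cross terms into a harmless $O(n^2)$ error.
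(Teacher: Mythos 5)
Your proof is correct, and it differs from the paper's in one genuine (if small) respect: the ingredient used to control the per-vertex sum of squares. Both arguments fix $x$, use the handshake identity $\sum_{y\in N(x)}t(xy)=2t(x)=d(x)+\delta(x)$, bound $\sum_{y\in N(x)}t(xy)^2$ by $\delta(x)^2+O(n)$, and finish by summing over $x$ and dividing by $2$. The paper gets the per-vertex bound from $t(xy)\geqslant 1$ alone via a smoothing/convexity step: among $d(x)$ values, each at least $1$, with fixed sum $2t(x)$, the sum of squares is maximized when $d(x)-1$ of them equal $1$ and one equals $\delta(x)+1$, giving $(\delta(x)+1)^2+(d(x)-1)$. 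You instead import the pointwise cap $t(xy)\leqslant 2t(x)-d(x)+1=\delta(x)+1$ from Lemma \ref{lem:bound_1} and apply $(a-L)(a-U)\leqslant 0$; your cap is precisely the largest value in the paper's extremal configuration, and the two bounds coincide exactly (both equal $\delta(x)^2+3\delta(x)+2\gamma n$). So the trade-off is: your version replaces the implicit extremal-configuration argument by a one-line algebraic identity, at the cost of invoking Lemma \ref{lem:bound_1}, which the paper's proof of this lemma does not need. Two minor points, both fine: the interval $[1,\delta(x)+1]$ is nonempty because Lemma \ref{lem:bound_1} together with $t(xy)\geqslant 1$ forces $\delta(x)\geqslant 0$ (you use this tacitly), and your closing remark that the one-sided estimate $t(xy)^2\leqslant(\delta(x)+1)t(xy)$ would ruin the constant $\tfrac12$ is accurate and correctly identifies why both endpoints are needed.
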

\begin{proof}
Fix a vertex $x$ and let $N(x)$ denote its neighbourhood. Then $\lvert N(x)\rvert=d(x)=2\gamma n+\delta(x)$ and
\[\sum_{y\in N(x)}t(xy)=2t(x)=2(\gamma n+\delta(x))=(2\gamma n+\delta(x)-1)\cdot 1+1\cdot(\delta(x)+1),\]
and, using $t(xy)\geqslant 1$ for all edges $xy$, this implies
\[\sum_{y\in N(x)}t(xy)^2\leqslant(\delta(x)+1)^2+(2\gamma n+\delta(x)-1)=\delta(x)^2+o(n^2),\]
and the claim follows by summation over all $x\in V$.
\end{proof}
\begin{theorem}\label{thm:asymp_1_10}
If $G$ corresponds to an $r$-regular maximal $(2,3)$-antichain, then $\bar r\leqslant (1/10+o(1))n$.
\end{theorem}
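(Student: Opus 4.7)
The plan is to derive a second, quadratic constraint linking $\gamma$ and $\beta$ which, together with Lemma~\ref{lem:gamma_beta}, pins down $\gamma\leqslant\tfrac{1}{10}+o(1)$.

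The key new ingredient is the inequality $d(x)+d(y)+d(z)\leqslant n+t(xy)+t(yz)+t(xz)$, valid for every triangle $xyz$; this is exactly the statement $a_1+a_2+3\leqslant n$ established in the proof of Theorem~\ref{thm:16_bound}. Summing it over all triangles and using that each vertex $v$ lies in $t(v)$ triangles (contributing $d(v)$ each time) and each edge $e$ lies in $t(e)$ triangles (contributing $t(e)$ each time) yields
\[
\sum_{v\in V}d(v)t(v)\;\leqslant\;n\lvert T\rvert+\sum_{e\in E}t(e)^2.
\]
Plugging in $d(v)=2\gamma n+\delta(v)$, $t(v)=\gamma n+\delta(v)$, and $\lvert T\rvert=\tfrac{\gamma+2\beta}{3}n^2$, the left-hand side equals $2\gamma^2n^3+6\gamma\beta n^3+\sum_v\delta(v)^2$. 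Applying Lemma~\ref{lem:t_xy_bound} to dominate $\sum_e t(e)^2\leqslant\tfrac12\sum_v\delta(v)^2+o(n^3)$ and rearranging gives
\[
\sum_v\delta(v)^2\;\leqslant\;\Bigl[\tfrac{2(\gamma+2\beta)}{3}-4\gamma^2-12\gamma\beta\Bigr]n^3+o(n^3).
\]
On the other hand, Cauchy--Schwarz applied to $\sum_v\delta(v)=2\beta n^2$ gives $\sum_v\delta(v)^2\geqslant 4\beta^2n^3$. Combining the lower and upper bounds and clearing denominators produces the clean quadratic inequality
\[
6(\gamma^2+3\gamma\beta+\beta^2)\;\leqslant\;\gamma+2\beta+o(1).
\]

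The final step is to combine this with Lemma~\ref{lem:gamma_beta}. Writing $\beta=\gamma+\eta$ with $\eta\geqslant 0$, the inequality becomes
\[
30\gamma^2-3\gamma+(30\gamma-2)\eta+6\eta^2\leqslant o(1).
\]
For $\gamma\geqslant\tfrac{1}{15}$ both the coefficient of $\eta$ and that of $\eta^2$ are non-negative, so the minimum of the left-hand side over $\eta\geqslant 0$ is attained at $\eta=0$; this forces $30\gamma^2\leqslant 3\gamma+o(1)$, hence $\gamma\leqslant\tfrac{1}{10}+o(1)$. For $\gamma<\tfrac{1}{15}$ the conclusion is automatic. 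The main obstacle I anticipate is the first move: recognising that the per-triangle computation inside the proof of Theorem~\ref{thm:16_bound} can be summed to produce the global inequality $\sum_v d(v)t(v)\leqslant n\lvert T\rvert+\sum_e t(e)^2$. Once that is in hand, the remainder of the argument is elementary algebra in $\gamma$ and $\beta$.
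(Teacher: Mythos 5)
Your proposal is correct and follows essentially the same route as the paper: sum the per-triangle inequality from Theorem~\ref{thm:16_bound} to get $\sum_v d(v)t(v)-\sum_e t(e)^2\leqslant n\lvert T\rvert$, control $\sum_e t(e)^2$ by Lemma~\ref{lem:t_xy_bound}, bound $\sum_v\delta(v)^2$ from below by Cauchy--Schwarz, and combine the resulting quadratic constraint with Lemma~\ref{lem:gamma_beta}. The only difference is that you spell out the final optimization (via $\beta=\gamma+\eta$), which the paper leaves implicit; the minor point that Lemma~\ref{lem:gamma_beta} only gives $\eta\geqslant -o(1)$ rather than $\eta\geqslant 0$ is harmless in this asymptotic setting.
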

\begin{proof}
From the proof of Theorem \ref{thm:16_bound} we take the inequality 
\[d(x)+d(y)+d(z)-t(xy)-t(yz)-t(xz)\leqslant n\]
which is valid for every triangle $xyz\in T$. Summation over all triangles gives
\[\sum_{x\in V}t(x)d(x)-\sum_{xy\in E}t(xy)^2\leqslant n\lvert T\rvert=\frac{\gamma+2\beta}{3}n^3.\]
Substituting
\[t(x)d(x)=\left(\gamma n+\delta(x)\right)\left(2\gamma n+\delta(x)\right)=2\gamma^2 n^2+3\gamma n\delta(x)+\delta(x)^2\]
we obtain
\[\sum_{x\in V}\delta(x)^2-\sum_{xy\in E}t(xy)^2\leqslant\left[\frac{\gamma+2\beta}{3}-2\gamma^2-6\beta\gamma+o(1)\right]n^3,\]
and using Lemma \ref{lem:t_xy_bound} and the Cauchy-Schwarz inequality, 
\[\frac{4\beta^2n^4}{2n}\leqslant\frac12\sum_{x\in V}\delta(x)^2\leqslant\left[\frac{\gamma+2\beta}{3}-2\gamma^2-6\beta\gamma+o(1)\right]n^3,\]
hence
\[\frac{\gamma+2\beta}{3}-2\gamma^2-6\beta\gamma-2\beta^2\geqslant o(1).\]
Together with Lemma \ref{lem:gamma_beta}, this implies $\gamma\leqslant 1/10+o(1)$, and thus concludes the proof.
\end{proof}
\begin{corollary}\label{cor:asump_9_10}
  If there exists an $r$-regular maximal $(2,3)$-antichain on $n$ points, then $r\geqslant(9/10+o(1))n$.
\end{corollary}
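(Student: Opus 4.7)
The plan is to derive this corollary directly from Theorem \ref{thm:asymp_1_10} by nothing more than an algebraic substitution. Recall the convention fixed in the introduction, $\overline r = n - 1 - r$, which translates the $r$-regularity condition into the graph-theoretic form $d(x) - t(x) = \overline r$ that the entire section is built around. All of the genuine content, in particular the applications of the triangle removal lemma (via Lemma \ref{lem:gamma_beta}) and of the Cauchy--Schwarz inequality used to eliminate the parameter $\beta$, has already been carried out in the proof of Theorem \ref{thm:asymp_1_10}.

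Concretely, I would proceed as follows. Suppose $\mathcal A$ is a $k$-regular maximal $(2,3)$-antichain on $n$ points, and let $G$ be the associated graph. Theorem \ref{thm:asymp_1_10} applies to $G$ and yields $\overline k \leqslant (1/10 + o(1)) n$. Substituting $\overline k = n - 1 - k$ and rearranging gives
\[k = n - 1 - \overline k \geqslant n - 1 - \left(\tfrac{1}{10} + o(1)\right) n = \left(\tfrac{9}{10} + o(1)\right) n,\]
which is the asserted bound. The additive $-1$ disappears into the $o(n)$ error term, so the leading coefficient $9/10$ is unaffected.

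There is really no obstacle here; the only thing worth pausing on is to confirm that the hypotheses of Theorem \ref{thm:asymp_1_10} are satisfied with $k$ in the role of $r$, which is immediate from the definitions. It may also be worth noting that if one wanted to avoid invoking the $o(1)$ bookkeeping at this final stage, one could equivalently state and prove the corollary as: for every $\varepsilon > 0$ there exists $n_0$ such that any $k$-regular maximal $(2,3)$-antichain on $n \geqslant n_0$ points satisfies $k \geqslant (9/10 - \varepsilon) n$. This is merely a reformulation of the same content.
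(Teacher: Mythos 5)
Your proposal is correct and matches the paper's intent exactly: the corollary is an immediate restatement of Theorem \ref{thm:asymp_1_10} via the substitution $\overline{k}=n-1-k$, with the additive constant absorbed into the $o(n)$ term. The paper gives no separate proof for precisely this reason.
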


\section{Constructions}\label{sec:constructions}
Here we present two different constructions for regular maximal $(2,3)$-antichains with small regularity. Section \ref{subsec:algebraic_construction} contains an algebraic construction for $n\equiv 0\pmod 3$. The regularity of these antichains is still $(1-o(1))n$, while in Section \ref{subsec:blowup_construction} we describe how certain two-coloured digraphs can be used to achieve regularity $cn$ with $c$ strictly less than $1$. 
\subsection{An algebraic construction}\label{subsec:algebraic_construction}
Let $n=3n'$, and let $\Gamma$ be an abelian group of order $\lvert\Gamma\rvert=n'$. Suppose that there are two sequences
\[A=(a_1,a_2,\ldots,a_p)\subseteq\Gamma\quad\text{and}\quad B=(b_1,\ldots,b_p)\subseteq\Gamma\]
satisfying the following conditions.
\begin{itemize}
\item The entries in both sequences are pairwise distinct, i.e., $a_i\neq a_j$ and $b_i\neq b_j$ for $i\neq j$.
\item The sequence $C=(b_1-a_1,b_2-a_2,\ldots,b_p-a_p)$ has pairwise distinct elements, i.e., $b_i-a_i\neq b_j-a_j$ for $i\neq j$.
\item $b_j-a_i\not\in C$ for all $i\neq j$.
\end{itemize}
Let $G$ be the graph with vertex set $V=\Gamma\times[3]$ and edge set
\begin{multline*}
E=\big\{\{(g,1),(h,2)\}\ :\ h-g\in A\big\}\cup\big\{\{(g,2),(h,3)\}\ :\ h-g\in C\big\}\\ \cup\big\{\{(g,1),(h,3)\}\ :\ h-g\in B\big\}.
\end{multline*}
The triangles in this graph are the sets of the form
\[\{(g,1),(g+a_s,2),(g+b_s,3)\}\]
for $g\in\Gamma$ and $s\in [p]$. Every vertex has degree $d(x)=2p$ and is contained in $t(x)=p$ triangles, so the corresponding maximal $(2,3)$-antichain is $(n-1-p)$-regular.

Let $p_{\max}(n')$ denote the maximal $p$ such that a group $\Gamma$ and sequences $A$ and $B$ satisfying the above conditions exist. Then our construction implies the following existence result.
\begin{theorem}
  If $n=3n'$, there exists a maximal, $r$-regular $(2,3)$-antichain for every $r\in\{n-1-p_{\max}(n'),\ldots,n-1\}$.
\end{theorem}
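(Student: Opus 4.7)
The plan is to reduce the claim to a single observation: any prefix of sequences $A$ and $B$ satisfying the three bulleted conditions still satisfies those conditions. Once this is established, the construction just described yields, for each $p$ with $0\leqslant p\leqslant p_{\max}(n')$, an $(n-1-p)$-regular maximal $(2,3)$-antichain, which covers every value $r\in\{n-1-p_{\max}(n'),\ldots,n-1\}$.

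Concretely, I would fix sequences $A^{*}=(a_1^{*},\ldots,a_{p_{\max}(n')}^{*})$ and $B^{*}=(b_1^{*},\ldots,b_{p_{\max}(n')}^{*})$ realizing $p_{\max}(n')$ in some abelian group $\Gamma$ of order $n'$, together with their associated difference sequence $C^{*}$. For a given $p$ in the stated range, set $A=(a_1^{*},\ldots,a_p^{*})$ and $B=(b_1^{*},\ldots,b_p^{*})$, and let $C$ be their difference sequence. Pairwise distinctness within $A$, within $B$, and within $C$ is inherited immediately from the corresponding property of the longer sequences. For the last condition, if $b_j^{*}-a_i^{*}\in C$ were to hold for some $i\neq j$ with $i,j\in[p]$, then a fortiori $b_j^{*}-a_i^{*}\in C^{*}$, contradicting the assumed property of the pair $(A^{*},B^{*})$.

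Applying the construction to the truncated pair $(A,B)$ produces a graph $G$ in which each vertex has degree $2p$, each vertex belongs to exactly $p$ triangles, and every edge lies in a triangle; the induced antichain is therefore maximal and $(n-1-p)$-regular. The boundary case $p=0$ corresponds to the empty sequences, which vacuously satisfy the three conditions and yield the antichain $\binom{[n]}{2}$, already noted in the introduction to be $(n-1)$-regular and maximal. There is no substantive obstacle: the whole argument amounts to the elementary observation that the three defining conditions on $(A,B)$ are preserved under passage to prefixes, so nothing beyond the construction already exhibited is required.
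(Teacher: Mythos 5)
Your proposal is correct and takes essentially the same route as the paper, which simply states that ``our construction implies the following existence result'' without further argument. You additionally make explicit the one detail the paper leaves implicit, namely that the three conditions on $(A,B)$ are inherited by prefixes (so every $p\leqslant p_{\max}(n')$ is realizable), which is a worthwhile clarification rather than a deviation.
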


For $r\in\{n-1-p_{\max}(m'),\ldots,n-1\}$ the constructed antichain has $\tbinom{n}{2}-n(n-1-r)$ pairs and $n(n-1-r)/3$ triples. So the size is $\tbinom{n}{2}-\tfrac23n(n-1-r)$ and this is the smallest possible size for a $r$-regular maximal $(2,3)$-antichain which follows from the following lemma.
\begin{lemma}
 Let $G$ be a graph on $n$ vertices in which every edge is contained in at least one triangle and $d(x)-t(x)=n-1-r$ for all vertices $x$. Then $\lvert E\rvert-\lvert T\rvert\leqslant \tfrac23n(n-1-r)$.
\end{lemma}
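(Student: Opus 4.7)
The plan is to pass to an equivalent degree statement and then attack it by combining the tools already developed in the paper. Summing $d(x)-t(x)=\overline r$ over all $x\in V$ and using $\sum_x d(x)=2\lvert E\rvert$ and $\sum_x t(x)=3\lvert T\rvert$ yields the identity
\[
2\lvert E\rvert-3\lvert T\rvert=n\overline r.
\]
Substituting $\lvert T\rvert=(2\lvert E\rvert-n\overline r)/3$ into $\lvert E\rvert-\lvert T\rvert$ gives $\lvert E\rvert-\lvert T\rvert=(\lvert E\rvert+n\overline r)/3$, so the target inequality $\lvert E\rvert-\lvert T\rvert\leqslant\tfrac{2}{3}n\overline r$ is equivalent to the clean bound
\[
\lvert E\rvert\leqslant n\overline r,
\]
i.e., the average degree of $G$ is at most $2\overline r$. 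This is the quantity I would aim to bound.

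Next, I would combine Lemma~\ref{lem:bound_1}, rewritten via $t(x)=d(x)-\overline r$ as a pointwise relation between $d(x)$ and the local counts $t(xy)$, with the triangle-sum estimate from the proof of Theorem~\ref{thm:16_bound}, namely $[d(x)-t(xy)]+[d(y)-t(yz)]+[d(z)-t(xz)]\leqslant n$ for every triangle $xyz$. Summing the latter over all triangles produces a global inequality linking $\sum_x d(x)t(x)$, $\sum_{xy\in E}t(xy)^2$ and $n\lvert T\rvert$. Substituting $t(x)=d(x)-\overline r$ converts this into an inequality among $\sum_x d(x)^2$, $\lvert E\rvert$, $\lvert T\rvert$ and $\overline r$; Cauchy--Schwarz in the form $\sum_x d(x)^2\geqslant(2\lvert E\rvert)^2/n$ then turns it into a quadratic inequality in $\lvert E\rvert$ from which, I expect, the bound $\lvert E\rvert\leqslant n\overline r$ can be isolated, perhaps after also using the trivial upper bound $\sum_{xy\in E}t(xy)^2\leqslant (\max t(xy))\cdot 3\lvert T\rvert$ to control the edge-squared sum.

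The main obstacle is that the natural application of Lemma~\ref{lem:bound_1} alone produces only \emph{lower} bounds on $d(x)$: since $t(xy)\geqslant 1$ forces $2t(x)\geqslant d(x)$, one gets $d(x)\geqslant 2\overline r$, which pushes $\lvert E\rvert$ in the wrong direction. Hence the needed upper bound must come almost entirely from the triangle-sum estimate, which plays the role of a maximality condition on $\mathcal A$. The trickiest bookkeeping will be to show that the ``excess'' contributions from edges with $t(xy)\geqslant 2$ are absorbed correctly in the triangle sum, so that the final inequality tightens to $\lvert E\rvert\leqslant n\overline r$ rather than the weaker bound that the pointwise estimates deliver on their own. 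I expect equality to hold precisely when every edge lies in exactly one triangle, which is the structural feature of the construction in Section~\ref{subsec:algebraic_construction}, and this should guide the choice of coefficients when summing the various inequalities.
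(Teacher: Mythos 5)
Your opening reduction is correct and is the useful part of the proposal: summing $d(x)-t(x)=\overline r$ over the vertices gives $2\lvert E\rvert-3\lvert T\rvert=n\overline r$, and eliminating $\lvert T\rvert$ shows that the stated inequality $\lvert E\rvert-\lvert T\rvert\leqslant\tfrac23 n\overline r$ is equivalent to $\lvert E\rvert\leqslant n\overline r$. But the second half of your plan cannot be completed, and the obstacle you flag in your final paragraph is not a bookkeeping nuisance --- it is fatal. As you yourself observe, $t(xy)\geqslant 1$ for every edge gives $2t(x)=\sum_{y\in N(x)}t(xy)\geqslant d(x)$, hence $d(x)\geqslant 2\overline r$ for every vertex and therefore $\lvert E\rvert\geqslant n\overline r$. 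So the bound you are chasing could hold only with equality, i.e., only when $G$ is exactly $2\overline r$-regular, and that does not follow from the hypotheses: $K_4$ minus an edge has every edge in a triangle and $d(x)-t(x)=1$ at all four vertices, but its degrees are $2$ and $3$, so $\lvert E\rvert=5>4=n\overline r$ and $\lvert E\rvert-\lvert T\rvert=3>\tfrac83=\tfrac23n\overline r$. No combination of the triangle-sum estimate with Cauchy--Schwarz will rescue an inequality that is false.

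The resolution is that the inequality sign in the statement (and the word ``smallest'' in the sentence introducing it) is a typo: the paper's own two-line proof establishes $\lvert E\rvert-\lvert T\rvert\geqslant\tfrac23 n(n-1-r)$, by adding $2\lvert E\rvert-3\lvert T\rvert=n\overline r$ to $\tfrac32\lvert T\rvert=\tfrac12\sum_{xy\in E}t(xy)\geqslant\tfrac12\lvert E\rvert$, and explicitly calls the resulting ``$\geqslant$'' the claim. That is also the direction that makes sense in context: it shows the constructed antichains, of size $\tbinom{n}{2}-\tfrac23n(n-1-r)$, are the \emph{largest} possible $r$-regular maximal $(2,3)$-antichains. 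With your own reduction the corrected statement is immediate: $\lvert E\rvert-\lvert T\rvert=\tfrac13\left(\lvert E\rvert+n\overline r\right)\geqslant\tfrac23 n\overline r$ because $\lvert E\rvert\geqslant n\overline r$. You already hold every piece needed for the true version of the lemma; abandon the hunt for an upper bound on $\lvert E\rvert$ and flip the inequality instead.
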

\begin{proof}
We have 
\[2\lvert E\rvert-3\lvert T\rvert=\sum_{x\in V}d(x)-t(x)=n(n-1-r)\]
and, using $t(xy)\geqslant 1$ for every edge $xy$,
\[\frac32\lvert T\rvert = \frac12\sum_{xy\in E}t(xy)\geqslant \frac{\lvert E\rvert}{2}\]
Adding these two inequalities we get $\frac32\left(\lvert E\rvert-\lvert T\rvert\right)\geqslant n(n-1-r)$ which is the claim.
\end{proof}

As observed in \cite{RuszaSzemeredi1976}, sets without 3-term arithmetic progressions can be used to construct graphs with every edge contained in exactly one triangle. This is a special case of the above construction with $A=C$ and $B=2A$. Let $A$ be a subset of $\Gamma$ without nontrivial 3-term arithmetic progressions, i.e., the only solutions to $x+y=2z$ in $A$ are of the form $x=y=z=a$ for some $a\in A$. Then $B=2A=\{a+a\ :\ a\in A\}$ and $C=A$ satisfy our conditions, and the triangles in the graph are precisely the sets $\{(g,1),(g+a,2),(g+2a,3)\}$ for $g\in\Gamma$. So if $r_3(\Gamma)$ denotes the maximal size of a subset of $\Gamma$ without 3-APs we get $p_{\max}(n')\geqslant r_3(\Gamma)$. 
\begin{example}
For an integer $N$, let $r_3(N)$ denote the maximal size of a subset of $[N]$ without 3-term APs (in $\ints$). The best known lower bound for $r_3(N)$, based on a modification of a very old construction \cite{behrend1938sequences}, can be found in \cite{elkin2010improved} and \cite{green2010note}:  
\[r_3(N)\geqslant\frac{cN\log^{1/4}(N)}{2^{2\sqrt{2\log_2N}}}\]
for some positive constant $c$. Clearly, $r_3(\ints_{n'})\geqslant r_3(\lfloor n'/2\rfloor)$, and so we get that for integers $n\equiv 0\pmod 3$ there exists an $r$-regular maximal $(2,3)$-antichain $\mathcal A$ on $n$ points of size $\lvert\mathcal A\rvert=\tbinom{n}{2}-\tfrac23n(n-1-r)$ whenever $n>r\geqslant n-1-r_3(\lfloor n/6\rfloor)$. In particular, there is a positive constant $c$ such that  
\[r\geqslant n\left(1-\frac{c\log^{1/4}(n)}{2^{2\sqrt{2\log_2n}}}\right)\]
is sufficient for the existence of an $r$-regular maximal $(2,3)$-antichain on $n$ points.
\end{example}
\begin{example}
Let $A$ be the set of all integers $t$, $0\leqslant t<n'/2$ such that the ternary representation of $t$ contains only the digits $0$ and $1$, and interpret $A$ as a subset of $\ints_{n'}$. Then $A$ does not contain 3-term APs and this yields a regular maximal $(2,3)$-antichain with regularity
\[r\approx n-1-\lfloor n/6\rfloor^{\log_32}\leqslant n-cn^{0.63}.\]
\end{example}
\begin{example}
Let $\Gamma$ be the additive group of the vector space $\mathbb{F}_3^m$, i.e., $n=3^{m+1}$. The 3-AP free sets in this group are called \emph{cap sets} and there are constructions of size greater than $2.217^m$ for arbitrary large $m$ (see \cite{edel2004extensions}). This gives $r$-regular maximal antichains for 
\[r\approx n-1-n^{\frac{m\log 2.217}{(m+1)\log 3}}<n-1-n^{\frac{0.724m}{m+1}}.\] 
\end{example}

\subsection{A blowup construction}\label{subsec:blowup_construction}
Note that all of our constructions did not get a regularity below $(1-o(1))n$. We now describe a way to achieve regularity less than $cn$ for some positive $c$. We start with a special class of digraphs with a 2-colouring of the arcs. Let $D=(N,A_0\cup A_1)$ be a 2-coloured digraph with node set $N$ and arc set $A=A_0\cup A_1$ where the sets $A_0$ and $A_1$ are the colour classes. All our digraphs contain no loops and no parallel arcs. For such a digraph $D$ and any positive integer $t$ we can define a graph $G=G(D,t)$ on $2t\lvert N\rvert$ vertices as follows. To every node $v\in N$ correspond $2t$ vertices labeled by $(v,i,j)$ for $i\in\{1,2,\ldots,t\}$ and $j\in\{0,1\}$, and these $2t$ vertices induce a matching containing the edges $\{(v,i,0),(v,i,1)\}$ for $i\in\{1,2,\ldots,t\}$. Let $E_2$ denote the set of the $t\lvert N\rvert$ matching edges. Furthermore, let
\begin{align*}
  E_0 &=\left\{\{(u,i,j),\,(v,i',0)\}\ :\  (u,v)\in A_0,\ i,i'\in\{1,\ldots,t\},\ j\in\{0,1\}\right\},\qquad\text{and}\\
  E_1 &=\left\{\{(u,i,j),\,(v,i',1)\}\ :\  (u,v)\in A_1,\ i,i'\in\{1,\ldots,t\},\ j\in\{0,1\}\right\}.
\end{align*}
The edge set of the graph $G$ is then $E_0\cup E_1\cup E_2$. By construction, every edge of $G$ is contained in at least one triangle. Also every triangle contains at most one edge from the matching $E_2$. The next lemma provides a condition on the (undirected) triangles in the coloured digraph $D$ which ensures that every edge in $E_0\cup E_1$ is contained in precisely one triangle.
\begin{lemma}\label{lem:unique}
If all triangles in $D=(N,A_0\cup A_1)$ have the form $\{u,v,w\}$ with $(u,v),\,(v,w)\in A_0$ and $(u,w)\in A_1$ or $(u,v),\,(v,w)\in A_1$ and $(u,w)\in A_0$, then in $G=G(D,t)$ every edge $\{x,y\}\in E_0\cup E_1$ is contained in a unique triangle.
\end{lemma}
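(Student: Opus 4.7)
My plan is to prove this by direct case analysis on the third vertex of a purported triangle. By the natural symmetry swapping the colour classes $A_0, A_1$ together with the matching labels $0$ and $1$, it suffices to treat edges in $E_0$; the case of $E_1$ is analogous. Fix such an edge $e = \{x, y\}$ with $x = (u, i, j)$, $y = (v, i', 0)$, and $(u, v) \in A_0$. The candidate triangle is $T_0 = \{x, y, (u, i, 1-j)\}$: the pair $\{x, (u, i, 1-j)\}$ is a matching edge in $E_2$, and $\{y, (u, i, 1-j)\} \in E_0$ because $y = (v, i', 0)$ is a valid label-$0$ target for the arc $(u, v) \in A_0$.

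For uniqueness, I suppose $z = (w, k, \ell)$ forms a triangle with $e$ and split on $w$. Since the only edges of $G$ inside a single node cluster are the $t$ matching edges, the case $w = u$ forces $z = (u, i, 1-j)$, giving $T_0$; and the case $w = v$ forces $z = (v, i', 1)$, after which $\{x, z\} = \{(u, i, j), (v, i', 1)\}$ would demand an arc of $D$ in the direction $(v, u)$. I read the paper's convention ``no parallel arcs'' as forbidding anti-parallel pairs as well, so that each unordered pair of nodes carries at most one arc; then no such arc exists and $\{x, z\}$ fails to be an edge, killing this case.

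The substantive case is $w \notin \{u, v\}$. Both $\{x, z\}$ and $\{y, z\}$ then lie in $E_0 \cup E_1$, so arcs exist between $\{u, w\}$ and $\{v, w\}$ in $D$, and $\{u, v, w\}$ is a triangle in the underlying undirected graph of $D$. The lemma's hypothesis restricts the three arcs to one of two permitted patterns, and embedding the fixed arc $(u, v) \in A_0$ into those patterns yields exactly three subcases: (A) $(v, w) \in A_0$ and $(u, w) \in A_1$; (B) $(w, u) \in A_0$ and $(w, v) \in A_1$; (C) $(u, w) \in A_1$ and $(w, v) \in A_1$. In (B) and (C) the only arc between $v$ and $w$ is $(w, v) \in A_1$, and since $y$ has label $0$, a direct unwinding of the definitions shows $\{y, z\}$ cannot be an edge of $G$. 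In (A) the arc $(v, w) \in A_0$ forces $\ell = 0$ for $\{y, z\}$ to lie in $E_0$, while the arc $(u, w) \in A_1$ can produce an edge $\{x, z\}$ only via $E_1$ with $\ell = 1$---incompatible. The only real obstacle is notational: the definitions of $E_0$ and $E_1$ treat the two endpoints asymmetrically, so I would begin the proof with a short auxiliary observation recording, for each $c \in \{0, 1\}$, precisely when an edge of $G$ between two distinct clusters lies in $E_c$; once that is in place, each subcase collapses to a one-line check.
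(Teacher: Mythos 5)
Your proof is correct and follows essentially the same route as the paper's: exhibit the one triangle supplied by the arc itself (via the matching edge at $u$), then observe that any further triangle through $e$ would force a triangle $\{u,v,w\}$ in $D$, and check that the two permitted arc patterns never yield a label $\ell$ for the third vertex compatible with both $\{x,z\}$ and $\{y,z\}$ being edges; the paper delegates this last check to Figure \ref{fig:good_triangle}, whereas you write out the three subcases. The one point worth flagging is your reading of ``no parallel arcs'' as excluding anti-parallel pairs: this is genuinely needed (a digon between $u$ and $v$ is not a triangle of $D$, so it is not excluded by the hypothesis, yet it would put the edge $\{(u,i,1),(v,i',0)\}$ in two triangles), and making that convention explicit is a small but real improvement over the paper's terser argument, which uses it implicitly in the phrase ``the arc $(u,v)\in A_{j'}$ or the arc $(v,u)\in A_j$.''
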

\begin{proof}
An edge $e=\{(u,i,j),\,(v,i',j')\}$ with $u\neq v$ is contained in precisely one triangle corresponding to the arc $(u,v)\in A_{j'}$ or to the arc $(v,u)\in A_{j}$. Any other triangle containing the edge $e$ comes from a triangle $\{u,v,w\}$ in the digraph $D$. But if all triangles in $D$ have the form required in the hypothesis of the lemma then the structure of the corresponding subgraphs in $G$ is shown in Figure \ref{fig:good_triangle} and it does not contain any unwanted triangles.   
\begin{figure}[htb]
  \centering
\includegraphics[width=.45\textwidth]{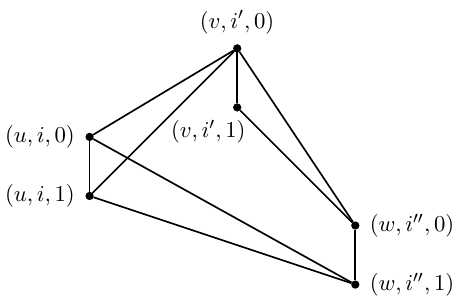}
  \caption{Subgraph from a triangle $\{u,v,w\}$ with $(u,v),\,(v,w)\in A_0$ and $(u,w)\in A_1$.}
  \label{fig:good_triangle}
\end{figure} 
\end{proof}
Under the conditions of Lemma \ref{lem:unique} it is easy to determine $d(x)$ and $t(x)$ for the vertices of the graph $G$. Let $d_i^-(v)$ and $d_i^+(v)$ denote the number of arcs in $A_i$ entering and leaving node $v$, respectively. Then we have
\begin{align*}
  d(x) &=
  \begin{cases}
    1+t\left(d_0^+(u)+d_1^+(u)+2d_0^-(u)\right) & \text{if }x=(u,i,0)\text{ for some }i,\\
    1+t\left(d_0^+(u)+d_1^+(u)+2d_1^-(u)\right) & \text{if }x=(u,i,1)\text{ for some }i,
  \end{cases}\\
  t(x) &=
  \begin{cases}
    t\left(d_0^+(u)+d_1^+(u)+d_0^-(u)\right) & \text{if }x=(u,i,0)\text{ for some }i,\\
    t\left(d_0^+(u)+d_1^+(u)+d_1^-(u)\right) & \text{if }x=(u,i,1)\text{ for some }i.
  \end{cases}
\end{align*}
In order to make $d(x)-t(x)$ constant we have to require that the in-degrees in both colours are the same constant over the node set $N$, i.e., $d_0^-(u)=d_1^-(u)=d$ for some $d$ and all $u\in N$. In the graph $G=G(D,t)$ with $n=2t\lvert N\rvert$ vertices we have $d(x)-t(x)=1+td=1+dn/2\lvert N\rvert$, and the regularity of the corresponding maximal $(2,3)$-antichain is
\[r=\left(1-\frac{d}{2\lvert N\rvert}\right)n-2.\]   
Figure \ref{fig:digraphs} shows two digraphs on $7$ and $8$ nodes, respectively, satisfying the condition in Lemma \ref{lem:unique}, and having constant in-degree $1$ in both colours. 
\begin{figure}[htb]
\begin{minipage}{.48\linewidth}
\begin{center}
\includegraphics[width=.5\textwidth]{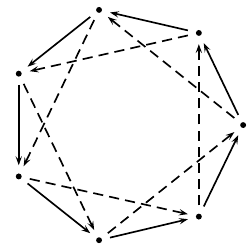} 
\end{center}
\end{minipage}\hfill
\begin{minipage}{.48\linewidth}
\centering
\includegraphics[width=.75\textwidth]{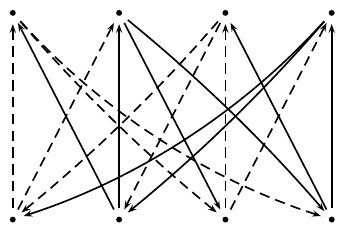}
\end{minipage} 
\caption{Two arc-coloured digraphs (colours indicated by solid and dashed lines).}\label{fig:digraphs}
\end{figure}
For the corresponding graphs $G=G(D,t)$ we obtain $d(x)=1+4t$ and $t(x)=3t$ for all vertices $x$, and this implies the following theorem.
\begin{theorem}\label{thm:small_regularity}
  For every $n\equiv 0\pmod{14}$ there is a $(13n/14-2)$-regular maximal $(2,3)$-antichain on $n$ points, and for every $n\equiv 0\pmod{16}$ there is a $(15n/16-2)$-regular maximal $(2,3)$-antichain.  
\end{theorem}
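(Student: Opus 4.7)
The plan is to derive this theorem as an immediate application of Lemma \ref{lem:unique} and the subsequent degree computation to the two specific two-coloured digraphs $D_7$ and $D_8$ depicted in Figure \ref{fig:digraphs}. No new combinatorial input is required beyond those digraphs themselves.

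First I would verify by direct inspection of the two digraphs that (i) they have no loops and no parallel arcs; (ii) every triangle, viewed as an undirected set of three arcs, has the structure required in the hypothesis of Lemma \ref{lem:unique}, namely two arcs of one colour forming a directed path $u\to v\to w$ together with one arc $(u,w)$ in the opposite colour; and (iii) at every node $u$ both coloured in-degrees $d_0^-(u)$ and $d_1^-(u)$ equal $1$, while the total out-degree $d_0^+(u)+d_1^+(u)$ equals $2$. Since the digraphs have only $7$ and $8$ nodes, these are all finite checks that can be read off from the figure.

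Next, for $n\equiv 0\pmod{14}$ I would set $|N|=7$ and $t=n/14$, and for $n\equiv 0\pmod{16}$ I would set $|N|=8$ and $t=n/16$, so that in both cases $G=G(D,t)$ has $2t|N|=n$ vertices. Lemma \ref{lem:unique} together with (iii) then guarantees that every edge in $E_0\cup E_1$ lies in exactly one triangle of $G$, while every matching edge in $E_2$ lies in a triangle by construction; hence $G$ does correspond to a maximal $(2,3)$-antichain on $[n]$. Substituting the values from (iii) into the degree formulas stated after Lemma \ref{lem:unique}, every vertex $x=(u,i,j)$ satisfies $d(x)=1+t(2+2\cdot 1)=1+4t$ and $t(x)=t(2+1)=3t$, so $\bar r=d(x)-t(x)=1+t$ is indeed constant over all $x$. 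The regularity of the resulting antichain is therefore $r=n-1-\bar r=n-2-t$, which evaluates to $13n/14-2$ in the first case and $15n/16-2$ in the second, exactly as claimed.

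The main obstacle is not in the proof above, which reduces to a short calculation once the digraphs are in hand, but in producing them: one needs coloured digraphs on few nodes that simultaneously obey the rigid triangle constraint of Lemma \ref{lem:unique}, have constant coloured in-degree, and have constant total out-degree. The two examples in Figure \ref{fig:digraphs} are precisely the fruit of that search; once they are drawn, the verification of properties (i)--(iii) is an inspection of a handful of triangles and local degree counts.
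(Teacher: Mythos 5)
Your proposal is correct and follows exactly the paper's route: the theorem is obtained by feeding the two coloured digraphs of Figure \ref{fig:digraphs} into the blowup construction $G(D,t)$ with $t=n/14$ resp.\ $t=n/16$, checking the hypotheses of Lemma \ref{lem:unique} and the constant coloured in-degrees by inspection, and reading off $d(x)=1+4t$, $t(x)=3t$, hence $r=n-2-t$. The only content beyond the preceding lemmas is the existence of those two digraphs, which you correctly identify as the finite check carried by the figure.
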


\section{Open problems}\label{sec:conjectures}
We conjecture that the first part of Theorem~\ref{thm:small_regularity} gives asymptotically the smallest possible regularity.
\begin{conjecture}\label{conj:main_conjecture}
The smallest possible regularity of a maximal $(2,3)$-antichain on $n$ points is $(13/14+o(1))n$.
\end{conjecture}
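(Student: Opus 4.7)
Our strategy would be to sharpen the estimate $\gamma \leq 1/10+o(1)$ in the proof of Theorem \ref{thm:asymp_1_10} to $\gamma \leq 1/14+o(1)$, matching the construction in Theorem \ref{thm:small_regularity}. The first step is a careful audit of the existing proof against the $14$-vertex blowup. One finds that Lemma \ref{lem:gamma_beta}, Lemma \ref{lem:t_xy_bound}, and the Cauchy--Schwarz lower bound $\sum_x\delta(x)^2\geq 4\beta^2 n^3$ are all asymptotically tight there. The loose ingredient is the per-triangle inequality $d(x)+d(y)+d(z)-t(xy)-t(yz)-t(xz)\leq n$, which evaluates to $10t+1$ rather than $14t$ on the blowup, leaving a slack of $\approx 4\overline r$ per triangle.

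The plan is to replace this bound by an averaged strengthening. Using the identity $d(x)+d(y)+d(z)-t(xy)-t(yz)-t(xz)=3+a_1+a_2$ together with $a_0+a_1+a_2+a_3=n-3$, the goal becomes a lower bound of the form $\sum_{xyz\in T}[a_0(xyz)+a_3(xyz)]\geq (4\overline r-o(n))\lvert T\rvert$. The sum $\sum_{xyz}a_3(xyz)$ equals $4$ times the number of $K_4$-subgraphs of $G$ and is controlled by $\sum_{xy\in E}\binom{t(xy)}{2}$, which brings second-moment information on $t(xy)$ into the argument; one would try to exploit equality in Lemma \ref{lem:t_xy_bound}, forcing each vertex $x$ to have one \emph{special} neighbour $y$ with $t(xy)\approx\delta(x)+1$ and all other incident edges satisfying $t(xy)=1$. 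The $a_0$ contribution should then be handled by showing that the special edges form an approximate perfect matching, aligned with triangles in the way dictated by the blowup construction.

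The main obstacle is the stability step: deducing rigid global structure (a blowup-like shape) from the mere asymptotic tightness of Lemmas \ref{lem:gamma_beta} and \ref{lem:t_xy_bound}. A natural alternative is the flag algebra method, which fits the problem well: we would set up a semidefinite program over flags of bounded order to bound $\lim(d(x)-t(x))/n$ subject to the condition that every edge lies in a triangle, hoping that the optimum $1/14$ emerges together with a dual certificate reflecting the $7$-node digraph. Even so, converting the SDP output into a human-readable argument, and establishing uniqueness of the extremal blowup in the spirit of Corollary \ref{thm:quadrangles} but in the asymptotic regime, would require substantial additional combinatorial work.
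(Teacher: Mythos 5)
This statement is posed as an open conjecture in the paper; there is no proof there to compare against, and your proposal does not close the problem either. To your credit, your audit of the $1/10$-bound is accurate: on the blowup of the $7$-node digraph one has $\bar r=t+1$ and $n=14t$, the quantities $\gamma$ and $\beta$ both tend to $1/14$, Lemmas \ref{lem:gamma_beta} and \ref{lem:t_xy_bound} and the Cauchy--Schwarz step hold with equality up to lower-order terms, and each triangle (which contains exactly one matching edge, with $t(xy)=2t$, and two edges with $t(xy)=1$) gives $d(x)+d(y)+d(z)-t(xy)-t(yz)-t(xz)=10t+1$, so the per-triangle bound $\leqslant n$ is indeed the sole source of slack. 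Your reformulation of the required strengthening as $\sum_{xyz\in T}\bigl[a_0(xyz)+a_3(xyz)\bigr]\geqslant(4\bar r-o(n))\lvert T\rvert$ is exactly the direction the paper itself indicates in Section \ref{sec:conjectures}: $\sum_T a_3$ is four times the number of $K_4$'s, $\sum_T a_0$ counts copies of the graph $H$ (a triangle plus a nonadjacent vertex), and the authors explicitly note that any cubic lower bound on these counts would improve the $9/10$-bound.

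The gap is that everything beyond this diagnosis is missing. You do not prove, nor sketch a proof of, the inequality $\sum_T\bigl(a_0+a_3\bigr)\geqslant(4\bar r-o(n))\lvert T\rvert$, and this is the entire difficulty. The observation that near-equality in Lemma \ref{lem:t_xy_bound} forces each vertex $x$ to have essentially one special neighbour carrying almost all of $t(x)$ is a plausible starting point, but deducing from it that the special edges form a near-perfect matching aligned with the triangle structure --- i.e., that $G$ is approximately a blowup of the conjectured type --- is precisely the stability step you concede you cannot yet carry out; note also that Conjecture \ref{conj:secondary_conjecture}, which would only settle the question within the class of blowup constructions, is itself open. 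The flag-algebra alternative is named but not executed, and there is no a priori guarantee that the SDP optimum at any finite flag order equals $1/14$. In short, you have correctly located where the known argument loses, in agreement with the paper's own remarks, and proposed a reasonable research programme; you have not produced a proof. (Incidentally, the conjecture as printed reads $(13/14+o(1))n^2$; since the regularity is at most $n-1$, this is evidently a typo for $(13/14+o(1))n$, which is what Theorem \ref{thm:small_regularity} achieves for $n\equiv 0\pmod{14}$.)
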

In the proof of the $9/10$-bound in Corollary~\ref{cor:asump_9_10} we assumed the worst case that for every triangle $xyz$ every vertex outside this triangle is adjacent to either one or two of the triangle vertices. It is not difficult to see that the case where an outside vertex is adjacent to all three vertices of a triangle can be neglected for the asymptotics, because the number of 4-cliques is $o(n^3)$. But the factor $9/10$ can be improved using any cubic lower bound on the number of copies of the graph $H$ on four vertices consisting of a triangle and an isolated vertex. Here are some observations that might be used to force copies of $H$.
\begin{itemize}
\item The condition that $d(x)-t(x)$ is large, say $cn$ for some $c\geqslant 1/14$, implies that the neighbourhood of every vertex has many connected components: at least $cn$ (and exactly $cn$ if the neighbourhood induces a forest).
\item So every neighbourhood contains an induced matching of size $cn$.
\item The union of $n$ induced matchings is small, only $o(n^2)$ edges~\cite{RuszaSzemeredi1976}.
\end{itemize}
As a first step towards Conjecture~\ref{conj:main_conjecture} one might try to prove that $13n/14$ is best possible for the type of construction described in Section \ref{subsec:blowup_construction}.
\begin{conjecture}\label{conj:secondary_conjecture}
 Let $D=(N,A_0\cup A_1)$ be an arc-coloured digraph with the following properties.
 \begin{enumerate}
 \item All triangles have the form $\{u,v,w\}$ with $(u,v),(v,w)\in A_0$ and $(u,w)\in A_1$ or $(u,v),(v,w)\in A_1$ and $(u,w)\in A_0$.
 \item Every node has exactly $d$ incoming arcs in each of the sets $A_0$ and $A_1$.  
 \end{enumerate}
Then $d\leqslant \lvert N\rvert/7$.
\end{conjecture}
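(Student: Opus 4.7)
My plan is to attempt the bound $d\leqslant |N|/7$ by exploiting the blowup construction from Section~\ref{subsec:blowup_construction} and refining the asymptotic argument in the proof of Theorem~\ref{thm:asymp_1_10}. Writing $n=|N|$, I would first extract the local structure that the triangle hypothesis forces on $D$. After a symmetrization that makes $a(u):=d_0^+(u)=d_1^+(u)=d$ for every node $u$, one checks that the four sets $N_0^-(u),N_1^-(u),N_0^+(u),N_1^+(u)$ are pairwise disjoint and each is an independent set in the underlying graph of $D$; moreover, the colours and directions of the arcs that may occur between any two of these sets are highly restricted. This rigidity produces a list of ``forbidden configurations'' in $D$ that will be used crucially below.

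Next, I would pass to the blowup $G=G(D,t)$ for large $t$. Routine calculations give $d(x)=1+4td$, $t(x)=3td$, $\delta(x)=2td-1$, $n'=2tn$, and $\bar r=1+td$, so that $\gamma\to\hat\gamma:=d/(2n)$ and $\beta\to\hat\gamma$ as $t\to\infty$. Counting $\sum_x t(x)=3T$ and applying Lemma~\ref{lem:unique} shows that every triangle of $G$ contains exactly one matching edge; a short case analysis of potential $K_4$'s then reveals that $G$ is $K_4$-free. Hence $a_3=0$ in the proof of Theorem~\ref{thm:16_bound}, and the per-triangle inequality sharpens to $d(x)+d(y)+d(z)-t(xy)-t(yz)-t(xz)=a_1+a_2+3\leqslant n'-a_0$. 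Applying inclusion--exclusion to the four natural failure events for a candidate isolated vertex $W=(w,i'',j_W)$, and using that each pairwise intersection of these events is empty (any non-empty intersection would force a triangle of $D$ outside the two allowed types, with two of the six intersections being the ``no 2-cycle'' events), one obtains the clean formula $a_0=t(2n-10d+O(1))$, so that $a_0/n'\to 1-10\hat\gamma$.

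Substituting the refined per-triangle bound into the chain of estimates in the proof of Theorem~\ref{thm:asymp_1_10} and invoking Lemmas~\ref{lem:gamma_beta} and~\ref{lem:t_xy_bound} collapses, after Cauchy--Schwarz, to the trivial inequality $2\hat\gamma^2\leqslant 2\hat\gamma^2+o(1)$; a direct computation shows that every inequality in the chain (including Lemma~\ref{lem:t_xy_bound}) is simultaneously tight for the blowup of the $7$-node digraph in Figure~\ref{fig:digraphs}. This identifies the main obstacle: the analytic machinery currently available is already sharp on the extremal construction, so a further ingredient is required to close the factor between $1/10$ and $1/14$. Two plausible routes are (a) a cubic lower bound on the number of copies of the graph $H$ (triangle plus an isolated vertex) in $G(D,t)$, which would provide an additive correction on the right-hand side of the Theorem~\ref{thm:16_bound} inequality summed over triangles, as flagged in the paragraph preceding the conjecture, and (b) a direct combinatorial attack on $D$ itself, using the $4$-partite link-graph structure from the first step to count (arc, incident triangle) pairs in two ways and extract the required bound $2dn\leqslant 2n^2/7$ without passing through the blowup at all.
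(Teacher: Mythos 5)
The statement you are addressing is Conjecture~\ref{conj:secondary_conjecture}, which the paper poses as an open problem in Section~\ref{sec:conjectures} and does not prove, so there is no proof in the paper to compare against. More importantly, your proposal does not prove the statement either: by your own account, feeding the refined per-triangle inequality into the machinery of Theorem~\ref{thm:asymp_1_10} collapses to the trivial inequality $2\hat\gamma^2\leqslant 2\hat\gamma^2+o(1)$, and you explicitly state that a further ingredient is still required to close the gap between $1/10$ and $1/14$. The two routes (a) and (b) you then list are directions for future work, not carried-out arguments; the bound $d\leqslant\lvert N\rvert/7$ is never derived. This is an honest exploration, but it is not a proof.

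Several intermediate steps are themselves gaps. The hypothesis of the conjecture constrains only the in-degrees; the ``symmetrization'' making $d_0^+(u)=d_1^+(u)=d$ for every node is not justified (only the \emph{average} out-degree in each colour equals $d$), and any modification of $D$ forcing equal out-degrees risks destroying property (1). The formula $a_0=t(2n-10d+O(1))$ rests on the unproved claim that all pairwise intersections of your four failure events are empty; this is precisely the kind of structural statement about $D$ that would require the forbidden-configuration analysis you only gesture at, and if some intersections are nonempty the count of $a_0$, and hence the entire subsequent computation, changes. Finally, your own conclusion --- that every inequality in the chain is simultaneously tight on the blowup of the $7$-node digraph of Figure~\ref{fig:digraphs} --- confirms the paper's remark preceding the conjecture: improving the $9/10$ bound (let alone reaching $13/14$) requires genuinely new cubic lower bounds on the number of $4$-cliques or of copies of the graph $H$, which neither the paper nor your proposal supplies.
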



\begin{thebibliography}{10}

\bibitem{behrend1938sequences}
F.~Behrend.
\newblock On sequences of integers containing no arithmetic progression.
\newblock {\em {\v{C}}asopis pro p{\v{e}}stov{\'a}n{\'\i} matematiky a fysiky},
  67(4):235--239, 1938.

\bibitem{boehm2011PhD}
M.~B\"ohm.
\newblock {\em Regular antichains}.
\newblock Dissertation, University of Rostock, 2011.

\bibitem{bohm2012k}
M.~B{\"o}hm.
\newblock $k$-regular antichains on $[m]$ with $k\leqslant m-2$.
\newblock {\em Australasian Journal of Combinatorics}, 53:41--52, 2012.

\bibitem{boehm2012MaximalFlatRegularAntichains}
M.~B\"ohm.
\newblock Maximal flat regular antichains.
\newblock {\em J. Combin. Math. Combin. Comput.}, to appear.

\bibitem{edel2004extensions}
Y.~Edel.
\newblock Extensions of generalized product caps.
\newblock {\em Designs, Codes and Cryptography}, 31(1):5--14, 2004.

\bibitem{elkin2010improved}
M.~Elkin.
\newblock An improved construction of progression-free sets.
\newblock In {\em Proceedings of the Twenty-First Annual ACM-SIAM Symposium on
  Discrete Algorithms}, pages 886--905. Society for Industrial and Applied
  Mathematics, 2010.

\bibitem{Fox2010}
J.~Fox.
\newblock A new proof for the graph removal lemma.
\newblock {\em Ann. Math.}, 174:561--579, 2011.

\bibitem{gerbner2011saturating}
D.~Gerbner, B.~Keszegh, N.~Lemons, C.~Palmer, D.~P{\'a}lv{\"o}lgyi, and
  B.~Patk{\'o}s.
\newblock Saturating sperner families.
\newblock {\em Graphs and Combinatorics}, 29(5):1355--1364, 2013.

\bibitem{green2010note}
B.~Green and J.~Wolf.
\newblock A note on {E}lkin’s improvement of {B}ehrend's construction.
\newblock {\em Additive Number Theory}, pages 141--144, 2010.

\bibitem{GriggsHartmannLeckRoberts2012}
J.R. Griggs, S.~Hartmann, U.~Leck, and I.~Roberts.
\newblock Squashed full flat antichains of minimum weight.
\newblock Forthcoming.

\bibitem{GruettmuellerHartmannKalinowskiLeckRoberts2009}
M.~Gr\"uttm\"uller, S.~Hartmann, T.~Kalinowski, U.~Leck, and I.~Roberts.
\newblock Maximum flat antichains of minimum weight.
\newblock {\em Electr. J. Comb.}, 16:R69, 2009.

\bibitem{kalinowski2012mfac}
T.~Kalinowski, U.~Leck, and I.T. Roberts.
\newblock Maximal antichains of minimum size.
\newblock {\em Electr. J. Comb.}, 20(2):P3, 2013.

\bibitem{kisvolcsey2006flattening}
{\'A}.~Kisv{\"o}lcsey.
\newblock Flattening antichains.
\newblock {\em Combinatorica}, 26(1):65--82, 2006.

\bibitem{lieby1999extremal}
P.~Lieby.
\newblock {\em Extremal problems in finite sets}.
\newblock PhD thesis, Northern Territory University, Darwin, 1999.

\bibitem{lieby2004antichains}
P.~Lieby.
\newblock Antichains on three levels.
\newblock {\em Electr. J. Comb.}, 11:R50, 2004.

\bibitem{payne2009finite}
S.E. Payne and J.A. Thas.
\newblock {\em Finite generalized quadrangles}, volume~9 of {\em {EMS} Series
  of Lectures in Mathematics}.
\newblock European Mathematical Society, 2009.

\bibitem{roberts2013existence}
I.T. Roberts and M.~B{\"o}hm.
\newblock On the existence of regular antichains.
\newblock {\em Discrete Mathematics}, 313(4):348--356, 2013.

\bibitem{RuszaSzemeredi1976}
I.Z. Ruzsa and E.~Szemer\'edi.
\newblock Triple systems with no six points carrying three triangles.
\newblock Combinatorics, Keszthely 1976, Colloq. Math. Soc. J\'anos Bolyai 18,
  939--945, 1978.

\end{thebibliography}

\end{document}